\definecolor{darkgreen}{rgb}{0.5,0.25,0}
\definecolor{darkblue}{rgb}{0,0,1}
\definecolor{answerblue}{rgb}{0,0,0.75}
\newcommand{\pd}{\partial}
\newcommand{\Ex}{\mathbb{E}}
\renewcommand{\ss}{\bm{\sigma}}
\renewcommand{\d}{\mathrm{d}}
\newcommand{\loc}{{\rm loc}}
\newcommand{\KK}{\bm{\Sigma}}
\newcommand{\JJ}{\bm{j}}
\newcommand{\todelta}{\overset{\delta\downarrow 0}{\longrightarrow}}
\newcommand{\R}{\mathbb{R}}
\newcommand{\T}{\mathbb{T}}
\newcommand{\abs}[1]{\left | #1 \right |}
\newcommand{\norm}[1]{\left \| #1 \right \|}
\newcommand{\bk}[1]{ \left(  #1 \right)}
\theoremstyle{theorem}
\newtheorem{theorem}{Theorem}[section]
\newtheorem{lemma}[theorem]{Lemma}
\theoremstyle{remark}
\newtheorem{remark}{Remark}[section]
\title[Commutator estimates for gradient-noise SPDE]
{Second order commutator estimates in renormalisation theory for SPDEs with gradient-type noise}
\author[P.H.C. Pang]{Peter H.C. Pang}
\address[P.H.C. Pang]{Department of Mathematics\\
   University of Oslo\\
  NO-0316 Oslo\\ Norway}
\email{\href{mailto:ptr@math.uio.no}{\tt ptr@math.uio.no}}
\keywords{gradient-type noise, renormalised solutions, double commutators}
\subjclass[2020]{35-06, 35A25, 35R60, 60H15}
\thanks{
The author is supported 
by the Research Council of Norway  project {\em INICE} (301538).}
\begin{document}

 \begin{abstract}
An important step in standard 
renormalisation arguments involve 
convolution against a standard mollifier. 
As pointed out in \cite{PS2018}, this 
generates second order commutator 
terms in equations with gradient-type 
noise. These are commutators 
similar to commutators in the 
well-known ``folklore lemma" of 
Di Perna--Lions \cite[Lemma II.1]{DL1989}, 
but not covered by standard 
renormalisation theory. 
In this note we establish the 
vanishing of these commutators 
for gradient-type noises on $\T^d$ 
not necessarily possessing divergence-free 
structure. 
 \end{abstract}
 
 \iffalse
 \abstract{
An important step in standard 
renormalisation arguments involve 
convolution against a standard mollifier. 
As pointed out in (Punshon-Smith--Smith 2018), this 
generates second order commutator 
terms in equations with gradient-type 
noise. These are commutators 
similar to commutators in the 
well-known ``folklore lemma" of 
Di Perna--Lions (Di Perna--Lions 1989, Lemma II.1), 
but not covered by standard 
renormalisation theory. 
In this note we establish the 
vanishing of these commutators 
for gradient-type noises on $\T^d$ 
not necessarily possessing divergence-free 
structure. 
 }
 \fi

\maketitle

\section{Gradient type noises}

In this note we consider stochastic partial 
differential equations (SPDEs) with 
gradient type noise of the form:
\begin{align}\label{eq:ito_main0}
0 &= \d u + F[u]\,\d t + \nabla  \bk{\ss u}\circ \d W, 
	\qquad (t,x) \in (0,T) \times \T^d. 
\end{align}
We assume that 
$\ss \in W^{2,2pq/(p - q)}(\T^d;\R^{d \times m})$, 
for some fixed positive integer $m$. 
For convenience, we also fix $ p \ge 2$ and $1 \le q \le p$ throughout. 
Moreover $W = (B_1, \ldots, B_m)^\top$ is an 
$m$-tuple of independent, standard Brownian 
motions on a filtered probability space 
$(\Omega, \mathcal{F}, 
	\{\mathcal{F}_t\}_{t \in [0,T]}, \mathbb{P})$. 

Writing $\ss  = (\sigma_{ik})_{1 \le i \le d, 1 \le k \le m}$,  
we mean by our notation above the following:
$$
 \nabla  \bk{\ss u}\circ \d W
 	 =   %\sum_{\substack{1 \le i \le d \\ 1 \le j \le m}}
 	 	\pd_{x^i}   \bk{\sigma_{ik} u}\circ \d B_k, 
$$
where repeated indices are summed over 
appropriate ranges throughout this paper, i.e., over $1 \le i \le d$ 
and $1 \le k \le m$ above. 
To avoid potential ambiguities, 
where $f$ is scalar-valued, 
we define $\nabla \bk{\ss f} 
	= \pd_{x^i } \bk{\sigma_{ik} f}$, 
a $\R^m$-valued object --- an important 
case being $f \equiv 1$; and where $f$ is $\R^m$-valued, 
$\nabla \bk{\ss f} 
	 = \pd_{x^i} \bk{\sigma_{ik} f_k}$, 
a scalar-valued object. We shall use 
``${\rm grad}(f)$" to denote the gradient 
of a function $f$. 

Equation \eqref{eq:ito_main0},  
as written, motivates the 
study of the formally equivalent 
formulation with It\^o noise:
\begin{align}\label{eq:ito_main1}
0 = \d u + F[u]\,\d t + \nabla   \bk{\ss u} \cdot \d W 
	- \frac12 \nabla   \bk{\ss \nabla   \bk{\ss u}}\,\d t.
\end{align}

Under relatively mild assumptions, 
distributional solutions to the 
equation with Stratonovich noise 
are distributional solutions to the corresponding equation 
with It\^o noise (see, e.g., \cite[Section 2]{AF2011}).

Probabilistically strong solutions $u$ 
are usually required to satisfy the SPDE 
weakly in space, be predictable in time, 
%and take values in a quasi-Polish space $\mathcal{X}$
and take values in a space $\mathcal{X}$.
%(see, e.g., \cite{Jak1998},\cite[Section 3]{BOS2016}, 
%and  \cite[Appendix B]{GHKP2022}). 
$F$ is a possibly nonlinear map from 
$\mathcal{X}$ (as opposed to a map from $\R$, where $u$ takes values). A motivating example here 
is the stochastic Camassa--Holm 
equation (derived in \cite{Hol2015} on $\R$) 
in which $d = 1$, $u = \pd_x v$, and for every 
$\varphi \in C^1(\T)$,
\begin{equation*}%\label{eq:motiv_example}
\begin{aligned}
\int_{\T} \varphi F[u]\,\d x  \,\d t
	&= - \int_{\mathbb{T}} \pd_x\varphi\, uv - \varphi\bk{P - v^2 - \frac12 u^2}\,\d x\,\d t ,
	\,\,\,\, \mathbb{P}-a.s.,\\%\quad\,\,
	 \bk{1 - \pd_{xx}^2} P  &= v^2 + \frac12 u^2.
\end{aligned}
\end{equation*}
%
%
%A similar example in $d$-dimensions may be 
%$$
%F[u] = b  \nabla u -  \frac12 \abs{\nabla u}^2
%$$
%for a fixed $b \in L^\infty([0,T]; W^{1,\infty}( \T^d; \R^d))$. 

This note studies an aspect of 
the renormalisation theory for \eqref{eq:ito_main1}, 
as it pertains to the gradient 
type stochastic ($\ss$-related) terms 
of \eqref{eq:ito_main1}, namely, 
$$
\nabla   \bk{\ss u} \d W 
	- \frac12 \nabla   \bk{\ss  \nabla  \bk{\ss u}}\,\d t 
= \pd_{x^i} \bk{\sigma_{ik} u} \,\d B_k  
	- \frac12 \pd_{x^j} \bk{\sigma_{jk}  
		\,\pd_{x^i} \bk{\sigma_{ik} u}}\,\d t.
$$

We use the label "renormalisation'' 
broadly, to mean the study of limits 
of $S_\ell(u_n)$, where  $\{S_\ell \}_{\ell \in \mathbb{N}} 
	\subseteq C^{1,1}(\R)$ is 
a family of suitably smooth and sufficiently 
slowly growing entropy functions, and 
$u_n$ is a sequence of approximate 
solutions tending to $u$ a.s., 
e.g., in some weak topology. 

In order to study $S(u)$, one often needs 
to derive the equation for $\d S(u)$ from 
\eqref{eq:ito_main1} (or $\d S(u_n)$ from 
the approximating system for $\d u_n$).
For stochastic equations, therefore, 
renormalisation is 
intimately related to deriving an 
It\^o formula for processes taking 
values in infinite dimensional spaces. 
There are well-known results for infinite 
dimensional It\^o formul\ae, e.g., in \cite{DZ2014,Kry2013} (see also references there). 
They are unavoidable for hyperbolic 
SPDEs, generally understood weakly. 
Such derivations 
sometimes involve convolution against 
a mollifier, so that the equation 
can be understood $x$-pointwise, and 
a standard It\^o formula applied. 
This is also the strategy we follow here. 

Since mollification and multiplication 
(by $\ss$, say) do not commute, there 
are commutator brackets that must 
be shown to vanish in appropriate topologies 
as the mollifier tends to a Dirac mass. 
These commutators are the focus of 
this note, and their vanishing are 
the results we prove 
in Lemmas \ref{thm:commutator1} and
\ref{thm:commutator2}, and 
Theorem \ref{thm:commutator3}
below, following \cite[Proposition 3.4]{PS2018}, 
\cite[Section 7]{HKP2023} quite closely.

First order commutator terms arise 
from mollification of gradient 
terms of the form  $b \cdot  \nabla u$ 
for $b \in L^1([0,T];W^{1,\alpha}(\T^d))$, 
in the work of Di Perna and Lions 
(see, e.g., \cite[Lemma II.1]{DL1989}, 
    \cite[Lemma 2.3]{Lio1996}). 
For gradient-type noise, it turns out that 
one encounters second order 
commutator terms as a result 
of the It\^o-to-Stratonovich 
conversion in a natural way. 
This was  pointed out in 
\cite{PS2018} in the context 
of the stochastic Boltzmann equation 
for $m = 1$, and a divergence-free 
condition on the $d$-vector $\ss$, 
which also satisfied
%\begin{align}\label{eq:boltzmann_noise}
%\nabla   \bk{\bm{\sigma} u}\circ \d B
%    = \nabla   \bk{\bm{\sigma} u} \d B 
%        + \frac12 \nabla   \bk{\bm{\sigma} \nabla   \bk{\bm{\sigma} u} }\,\d t, 
%\end{align}
$\ss \in W^{1,2p/(p - 2)}_{df, \loc}(\R^d;\R^d)$ 
(with the subscript $df$ denoting 
divergence-free fields), 
$\ss \cdot {\rm grad}(\ss) \in W^{1,p/(p - 1)}_\loc(\R^d;\R^d)$and 
$u \in L^p(\Omega \times [0,T]; (L^p \cap L^1)(\R^d))$. 
Most of the computations 
are directly inspired by 
\cite[pp 654 -- 657]{PS2018}. 
The $d = m = 1$ analogue on $\T$ 
(obviously without the divergence-free 
condition) was worked out in 
\cite{GK2021,HKP2021,HKP2023}. 

Here we present results 
for which no algebraic 
conditions are assumed on the 
derivatives of $(\sigma_{ij})_{1 \le i \le d, 1 \le j \le m}$. 
These commutator estimates 
are not inherently stochastic, 
but gradient type 
noise force us to confront them, 
and similar transport/continuity 
type terms the the non-martingale 
parts of the equation (say, where  
$F[u] = b \cdot \nabla u$) do not. 
They constitute one small 
novel challenge of many that arise 
in the analysis of familiar PDEs 
perturbed by noise (compare, e.g., 
\cite{GHKP2022} and the corresponding 
results of \cite{CHK2005,XZ2002} 
in the deterministic setting). 
%We also discuss a variant of related results 
%with noise of the form $\nalba \bk{\sigma u } \circ \d W$,
%where $\sigma$ is scalar $W$ is a $d$-tuple 
%without divergence-free assumptions, in 
%Theorem \ref{} \todo{fill in} below.
%These double commutators 
%have to be properly bounded (Theorem \ref{}).

We end this introduction by 
pointing out that the relevance 
of gradient-type noises (sometimes 
called "convective noise" or 
"transport noise") 
are interesting for geometric mechanical 
reasons \cite{CH2018,Hol2015}, 
and have also been widely studied  
in stochastic fluid models and 
particularly in studies on 
regularisation by noise 
(see e.g., \cite{AF2011,Fla2010,FL2020}, and references there).

\section{Double commutator estimates}

In this section, we present our main results, 
on the vanishing of commutators that arise 
in the renormalisation of \eqref{eq:ito_main1} 
as it pertains to the noise terms. 
First we explain in detail how it 
is that certain new terms (vis-\`a-vis 
renormalisation in the deterministic context)
arise (see Eqs.\eqref{eq:ito_rule_applied} -- \eqref{eq:E2E3}). 
Next, we show in Theorem \ref{thm:commutator3} 
and lemmas leading to it, how 
these new terms have a double 
commutator structure, 
and also how they vanish as the 
mollification parameter tends to nought.

We begin by introducing  the mollifier. 
Let $J_\delta$ be a standard 
Friedrichs mollifier on $\T^d$. 
For $f \in L^p(\T^d; \R^m)$, write 
$f_\delta$ for the $m$-tuple $f * J_\delta$.
Mollifying the equation by integrating 
against $J_\delta(x - y)\,\d x$, we have
\begin{equation}\label{eq:ito_main_delta}
\begin{aligned}
0 = \d u_\delta + & F[u_\delta]\,\d t 
    + \nabla   \bk{ \ss  u_\delta}\cdot  \d W\\
   &   - \frac12 \nabla   \bk{ \ss    
             \nabla \bk{\ss  u_\delta}}\,\d t
        + E^{(1)}_\delta \,\d t
        + E^{(2)}_\delta  \cdot\d W 
        + E^{(3)}_\delta\,\d t ,
\end{aligned}
\end{equation}
where
\begin{equation}\label{eq:commutator_defin}
\begin{aligned}
E^{(1)}_\delta[u] 
&:=    F[u]*J_\delta 
    -     F[u_\delta],\\
E^{(2)}_\delta[u] 
&:=  \nabla   \bk{\ss  u}*J_\delta 
    - \nabla  \bk{\ss  u_\delta},\\
E^{(3)}_\delta[u] 
&:= -\frac12\Big(\nabla   \bk{\ss   
     \nabla   \bk{\ss   u}}\Big)*J_\delta 
    +  \frac12\nabla   \bk{\ss   
     \nabla   \bk{\ss   u_\delta}}.
% E^{(3)}_\delta[u] 
% &:= -\frac12\Big(  \bk{\ss  \nabla}
%       \bk{\ss  \nabla u_\delta}\Big)*J_\delta 
%     +  \frac12 \bk{\ss  \nabla} 
%       \bk{\ss  \nabla u_\delta}.
\end{aligned}
\end{equation}
We expect these error terms 
introduced by mollification, to vanish 
in appropriate senses as $\delta \downarrow 0$.

Recall our convention that 
$\nabla \bk{\ss f} = \pd_i \bk{\sigma_{ik} f}$ 
is an $\R^m$-valued object when $f$ is scalar-valued (including $f \equiv 1$), 
and that $\nabla\bk{\ss f} = \pd_i \bk{\sigma_{ik} f_k}$ is scalar-valued  
when $f$ is $\R^m$-valued. 
Introducing an operator 
notation to emphasise the 
structure of the errors $E^{(i)}_\delta$, 
let $\KK f := \nabla   \bk{\ss  f}$, 
and $\JJ_\delta f := J_\delta * f$. 
Using commutator brackets, we can write 
\begin{equation}\label{eq:E_as_operators}
\begin{aligned}
E^{(2)}_\delta[u] 
&    = \big[ \JJ_\delta, \KK \big](u) 
   := \JJ_\delta \KK u - \KK \JJ_\delta u,\\
%   \quad
E^{(3)}_\delta[u] &= \frac12 \Big(\KK \KK \JJ_\delta u 
    - \JJ_\delta \KK \KK u \Big) = \frac12
	\Bigl( \KK \bigl[\KK, \JJ_\delta\bigr]( u)
	+\bigl[\KK, \JJ_\delta\bigr] \KK ( u) \Bigr).
\end{aligned}
\end{equation}

Recall we have fixed  $p \ge 2$. 
Suppose it is known {\em a priori} 
that $u$ is controlled thus:
\begin{align}\label{eq:apriori_u}
\Ex \norm{u}_{L^p([0,T]\times \T^d)}^p  \lesssim 1.
\end{align}

We can apply the It\^o formula  
to \eqref{eq:ito_main_delta}, understanding 
that equation $x$-{\em pointwise}. 
Recall that  $q \le p$. 
We use an entropy $S \in C^2(\R)$ 
satisfying the growth conditions 
\begin{align}\label{eq:bds_on_S}
\abs{S(r)} \lesssim 1 + \abs{r}^q, 
\quad \abs{S'(r)} \lesssim 1 + \abs{r}^{q - 1},
\quad \abs{S''(r)} \lesssim 1 + \abs{r}^{q - 2},
\end{align}
obtaining:
\begin{equation*}%\label{eq:ito_rule_applied}
\begin{aligned}
0  = \d S(u_\delta) & + S'(u_\delta) F[u_\delta]\,\d t 
    + S'(u_\delta) \nabla  \bk{\ss u_\delta  } \cdot  \d W\\
        & - \frac12 S'(u_\delta) \nabla  \bk{\ss  
               \nabla \bk{\ss  u_\delta}}\,\d t
        + S'(u_\delta)  E^{(1)}_\delta \,\d t\\
        &+ S'(u_\delta) E^{(2)}_\delta \cdot \d W 
        + S'(u_\delta) E^{(3)}_\delta - \frac12 S''(u_\delta) \abs{E^{(2)}_\delta + \nabla  \bk{\ss u_\delta } }^2\,\d t.
\end{aligned}
\end{equation*}

We assume no a priori bounds 
on $u$ beyond \eqref{eq:apriori_u}. 
In particular, we assume no bounds on ${\rm grad}(u)$.
Therefore we perform further manipulations 
on the mollified equation 
allows us to put it into a 
more "conservative form" (see, e.g., \cite[Eq.~(3.4)]{GHKP2022} in the stochastic Camassa--Holm context): 
\begin{equation}\label{eq:ito_rule_applied}
\begin{aligned}
0  &= \d S(u_\delta) + S'(u_\delta) F[u_\delta]\,\d t \\
&\quad\,\,    + \bk{\nabla \bk{\ss S'(u_\delta) } -  \nabla \ss \bk{S(u\delta) - S'(u_\delta) u_\delta)}} \cdot  \d W\\
    &\quad\,\,    +  \frac12 \nabla \bk{\ss S'(u_\delta) \nabla\bk{\ss u_\delta}} \,\d t 
    + \frac12 \nabla \bk{\ss \bk{S(u_\delta) - S'(u_\delta) u_\delta} \nabla \ss } \,\d t\\
        & \quad\,\,    - \frac12 S''(u_\delta) \abs{\nabla \ss}^2 u_\delta^2 \,\d t
         - \frac12 \bk{S(u_\delta) - S'(u_\delta) u_\delta} \nabla \bk{\ss \nabla \ss }\,\d t 
                 \\
        &\quad\,\,    + S'(u_\delta)  E^{(1)}_\delta \,\d t+ S'(u_\delta) E^{(2)}_\delta \cdot \d W 
        + S'(u_\delta) E^{(3)}_\delta\\
       & \quad\,\,     -  S''(u_\delta) \bk{\frac12 \abs{E^{(2)}_\delta}^2 + E^{(2)}_\delta \cdot \nabla  \bk{\ss u_\delta } }\,\d t ,
\end{aligned}
\end{equation}

We are interested in 
showing that the terms 
$E^{(i)}_\delta$ vanish 
appropriately as $\delta \downarrow 0$. 
In the following we
derive bounds for terms involving 
$E^i_\delta$ for $i = 2,3$ only. 
These are the terms that arise 
from the gradient noise. 

The bound for $E^{(2)}_\delta$ follows readily from 
Di Perna--Lions's folklore lemma \cite[Lemma II.1]{DL1989}. 
\begin{lemma}[Commutator estimates]\label{thm:commutator1}
Fix $p \ge 2$ and $1 \le q \le p$. Let $u \in L^p(\Omega \times [0,T]\times \T^d)$ 
and suppose $\ss \in W^{1,pq/(p - q)}(\T^d;\R^{d\times m})$, 
with $\ss \in W^{1,\infty}(\T^d;\R^{d\times m})$ if $p = q$. 
Define the commutator 
$E^{(2)}_\delta[u]$  as in 
\eqref{eq:commutator_defin}. 
The following convergence holds:
\begin{align*}%\label{eq:commutator_converge1}
 E^{(2)}_\delta[u]
	\todelta 0 \qquad \text{
in $L^q(\Omega \times [0,T]\times \R)$}.
\end{align*}
\end{lemma}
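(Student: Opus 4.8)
The plan is to reduce the statement to the classical Di Perna--Lions commutator lemma. First I would unfold the definition
\[
E^{(2)}_\delta[u] = \nabla(\ss u) * J_\delta - \nabla(\ss u_\delta) = \pd_{x^i}(\sigma_{ik} u) * J_\delta - \pd_{x^i}(\sigma_{ik} u_\delta),
\]
and move the derivative outside the mollification, writing $\pd_{x^i}(\sigma_{ik} u) * J_\delta = \pd_{x^i}\big((\sigma_{ik} u) * J_\delta\big)$, so that componentwise
\[
E^{(2)}_\delta[u]_k = \pd_{x^i}\Big( (\sigma_{ik} u) * J_\delta - \sigma_{ik}\, u_\delta \Big).
\]
The bracketed object is exactly the (zeroth-order) Di Perna--Lions commutator $r_\delta(\sigma_{ik}, u) := (\sigma_{ik} u)*J_\delta - \sigma_{ik} u_\delta$; its distributional $x^i$-derivative can be expanded using the product rule as
\[
\pd_{x^i} r_\delta(\sigma_{ik}, u) = (\pd_{x^i}\sigma_{ik})\, u * J_\delta + (\sigma_{ik}\,\pd_{x^i} u)*J_\delta - (\pd_{x^i}\sigma_{ik})\, u_\delta - \sigma_{ik}\, \pd_{x^i} u_\delta,
\]
but since we assume nothing on $\mathrm{grad}(u)$ we must not let the $\pd_{x^i}u$ terms stand alone; instead, group them as the standard first-order commutator $(\sigma_{ik} \pd_{x^i}u)*J_\delta - \sigma_{ik}\pd_{x^i}u_\delta$ together with the lower-order pieces, which is precisely the quantity controlled by \cite[Lemma II.1]{DL1989}.

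Concretely, the plan is: (i) write $E^{(2)}_\delta[u]_k = \big[(\pd_{x^i}\sigma_{ik}) u\big]*J_\delta - (\pd_{x^i}\sigma_{ik}) u_\delta \;+\; \big( (\sigma_{ik}\pd_{x^i}u)*J_\delta - \sigma_{ik}\pd_{x^i}u_\delta\big)$; (ii) observe the second group is the Di Perna--Lions commutator $R_\delta(u)$ associated to the vector field $b = (\sigma_{\cdot k})$, which by their folklore lemma tends to $-u\,\mathrm{div}\,b = -(\pd_{x^i}\sigma_{ik}) u$ strongly in $L^1_{\mathrm{loc}}$, and more quantitatively $\|R_\delta(u) + u\,\pd_{x^i}\sigma_{ik}\|_{L^q} \to 0$ when $u\in L^p$ and $\pd\ss \in L^{pq/(p-q)}$ by Hölder with exponents $p/q$ and $(p-q)/q$ — note $1/p + (p-q)/(pq) = 1/q$; (iii) observe the first group converges to $(\pd_{x^i}\sigma_{ik}) u$ in the same space since $\pd_{x^i}\sigma_{ik}\in L^{pq/(p-q)}$ and $u \in L^p$ implies $(\pd_{x^i}\sigma_{ik})u \in L^q$, and mollification of an $L^q$ function converges in $L^q$, while the product $(\pd_{x^i}\sigma_{ik})u_\delta \to (\pd_{x^i}\sigma_{ik})u$ in $L^q$ by dominated convergence using $\|u_\delta\|_{L^p}\le\|u\|_{L^p}$; (iv) add the two limits: they cancel, so $E^{(2)}_\delta[u]\to 0$ in $L^q([0,T]\times\T^d)$ for a.e. $\omega$. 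The case $p=q$ is handled by taking $\pd\ss \in L^\infty$ and Hölder with exponents $\infty$ and $q$. Finally, to upgrade a.s.\ convergence to convergence in $L^q(\Omega\times[0,T]\times\T^d)$ one invokes dominated convergence in $\omega$, using the uniform bound $\|E^{(2)}_\delta[u]\|_{L^q([0,T]\times\T^d)} \lesssim \|\ss\|_{W^{1,pq/(p-q)}}\|u\|_{L^p([0,T]\times\T^d)}$ (uniform in $\delta$, from the same Hölder estimates and the $L^q$-boundedness of the commutator), which is integrable in $\omega$ by \eqref{eq:apriori_u}.

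The only slightly delicate point — and where I'd be careful rather than where there is a genuine obstacle — is that the Di Perna--Lions lemma is usually phrased for the first-order commutator $R_\delta(u) = b\cdot\mathrm{grad}(u_\delta) - \mathrm{div}(b u)*J_\delta + \ldots$ acting on $u$ with no regularity, so I need to match conventions (divergence form vs.\ advective form, and the sign of the limiting term $u\,\mathrm{div}\,b$) and confirm that the regularity $\ss\in W^{1,pq/(p-q)}$, $u\in L^p$ falls inside the hypotheses of \cite[Lemma II.1]{DL1989} with the stated $L^q$ mode of convergence; since everything is on the compact manifold $\T^d$ there are no issues at infinity and the $L^q_{\mathrm{loc}}$ convergence in the classical statement is genuinely $L^q(\T^d)$ convergence here. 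No step should present a real difficulty — this lemma is essentially a bookkeeping repackaging of the classical result, which is presumably why the paper states the substantive work is in the second-order commutator $E^{(3)}_\delta$.
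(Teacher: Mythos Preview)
Your approach coincides with the paper's: the paper does not give a detailed proof of this lemma but simply states that it ``follows readily from Di Perna--Lions's folklore lemma \cite[Lemma II.1]{DL1989}''. Your plan---split $E^{(2)}_\delta$ into the standard transport commutator plus a lower-order remainder and invoke \cite[Lemma II.1]{DL1989}---is exactly the intended reduction, and the regularity bookkeeping with H\"older exponents $p$ and $pq/(p-q)$ is correct.

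That said, you misidentify the two intermediate limits in (ii) and (iii). The Di Perna--Lions commutator
\[
R_\delta(u)=(\sigma_{ik}\,\pd_{x^i}u)*J_\delta-\sigma_{ik}\,\pd_{x^i}u_\delta
\]
converges to $0$ in $L^q$, not to $-u\,\pd_{x^i}\sigma_{ik}$; this is precisely the content of \cite[Lemma~II.1]{DL1989}. Likewise, your own reasoning in (iii) shows that both $[(\pd_{x^i}\sigma_{ik})u]*J_\delta$ and $(\pd_{x^i}\sigma_{ik})u_\delta$ converge to $(\pd_{x^i}\sigma_{ik})u$, so their \emph{difference}---which is your ``first group''---converges to $0$, not to $(\pd_{x^i}\sigma_{ik})u$. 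The correct limits are therefore $0+0=0$ and the conclusion survives, but the ``cancellation'' you describe in (iv) is spurious. You may be conflating the transport-form commutator above with the variant $\mathrm{div}(bu)*J_\delta - b\cdot\nabla u_\delta$, which \emph{does} converge to $u\,\mathrm{div}\,b$; be sure to match the form you write down with the statement you cite.
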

\begin{remark}
It is only required that 
$\ss \in W^{1,pq/(p - q)}(\T^d;\R^{d\times m})$ 
here, but in dealing with the double 
commutator below, we shall be 
requiring the full assumption that 
$\ss \in W^{2,2pq/(p - q)}(\T^d;\R^{d\times m})$, 
\end{remark}
\begin{remark}%\label{rem:remainders}
Recall the {\em a priori} bound 
\eqref{eq:apriori_u} and the 
growth conditions \eqref{eq:bds_on_S} on $S$. 
Given $p/(p - q + 1) \le q$ 
(because $q \mapsto p/(p - q + 1)$ is convex for $1 \le q \le p$, 
and equality is attained at the end-points), 
Lemma \ref{thm:commutator1} is enough 
to ensure the vanishing  
of terms in \eqref{eq:ito_rule_applied} 
involving $E^{(2)}_\delta$ as $\delta \downarrow 0$, except the term 
$\frac12 S''(u_\delta) E^{(2)}_\delta \cdot \nabla  \bk{\ss u_\delta }$. 
\end{remark}

The bound for $E^{(3)}_\delta$ is more delicate, 
and it is possible only to prove the convergence 
of the combination 
\begin{align}\label{eq:E2E3}
 S'(u_\delta) E^{(3)}_\delta -  S''(u_\delta) 
        		E^{(2)}_\delta \cdot \nabla \bk{\ss u_\delta} .
\end{align}
This combination has a double 
commutator structure. Therefore we 
first state the following technical 
lemma, whose proof we relegate 
to Section \ref{sec:proof_lemma2}.
\begin{lemma}[Double commutator estimate]\label{thm:commutator2}
Fix $p \ge 2$ and $1 \le q \le p$. 
Let $u \in L^p(\Omega \times [0,T] \times \T^d)$,
and suppose $\ss \in W^{2,2pq/(p - q)}(\T^d;\R^{d\times m})$, 
with $\ss \in W^{2,\infty}(\T^d;\R^{d\times m})$ if $p = q$. 
We have the convergence 
\begin{align*}%\label{eq:commutator_defin2}
	\Bigl[\bigl[\KK,\JJ_\delta\bigr],\KK\Bigr](u)
	\todelta 0 \qquad \text{
in $L^q(\Omega \times [0,T]\times \R)$}.
\end{align*}
\end{lemma}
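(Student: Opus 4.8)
The plan is to expand the double commutator $\bigl[[\KK,\JJ_\delta],\KK\bigr](u)$ explicitly in terms of $\ss$, $J_\delta$, and $u$, identify the pieces that look like ordinary Di Perna--Lions first-order commutators (and hence vanish by the folklore lemma, or by Lemma \ref{thm:commutator1}), and then isolate the genuinely new ``second-order'' remainder, which I expect to be a term where two derivatives have landed on $\ss$ and none on $u$. Writing $\KK f = \partial_{x^i}(\sigma_{ik} f)$, the inner commutator is $[\KK,\JJ_\delta](u) = \partial_{x^i}\bigl((\sigma_{ik} u)*J_\delta - \sigma_{ik}(u*J_\delta)\bigr)$; applying $\KK$ again and subtracting the other order produces, after the Leibniz rule, a sum of terms classified by how many of the (at most two) spatial derivatives act on $\ss$-factors versus on the mollified quantities. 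First I would carry out this bookkeeping carefully, being careful that because $\KK$ contains a derivative, the ``outer'' application of $\KK$ to a commutator is itself not just a commutator but picks up a $\nabla\ss$ factor times a commutator (exactly the algebra already recorded in \eqref{eq:E_as_operators} for $E^{(3)}_\delta$).

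The key technical device will be the classical representation of a mollification commutator: for suitable $g$,
\begin{equation*}
\bigl(g\,(\partial_{x^i} h)\bigr)*J_\delta - g\,\bigl((\partial_{x^i} h)*J_\delta\bigr)
= \int \bigl(g(x-y)-g(x)\bigr)\,\partial_{y^i}J_\delta(y)\, h(x-y)\,\d y + (\text{easier terms}),
\end{equation*}
together with the uniform-in-$\delta$ bound $\norm{g(\cdot - y)-g(\cdot)}_{L^r}\lesssim \abs{y}\,\norm{\mathrm{grad}\,g}_{L^r}$ and the scaling $\int \abs{y}\,\abs{\partial_{y^i}J_\delta(y)}\,\d y \lesssim 1$. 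For the truly second-order term I would need the analogous expansion one order higher, writing the difference quotient of $\ss$ as a first-order Taylor remainder, so that the factor $\abs{y}^2\,\abs{\partial^2 J_\delta}$ again integrates to $O(1)$ and the leftover is controlled by $\mathrm{grad}^2\ss$; this is exactly where the hypothesis $\ss\in W^{2,2pq/(p-q)}$ (rather than merely $W^{1,\cdot}$) is consumed. The exponent $2pq/(p-q)$ is chosen so that, by Hölder with $u\in L^p$ in space-time (via \eqref{eq:apriori_u}) and one further Hölder to split the two $\ss$-derivative factors, every resulting product lies in $L^q(\Omega\times[0,T]\times\T^d)$; I would track the exponents to confirm $\tfrac{1}{q} = \tfrac{1}{p} + \tfrac{p-q}{2pq} + \tfrac{p-q}{2pq}$, which is the arithmetic forcing that particular Sobolev exponent.

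The structure of the argument is then: (i) algebraic expansion of the double commutator into a bounded number of terms; (ii) for each term in which at least one derivative falls on $u$ or on a mollified factor, recognise a standard Friedrichs commutator and invoke the Di Perna--Lions lemma (as in Lemma \ref{thm:commutator1}) to get $L^q$-convergence to $0$, using dominated convergence in $\Omega$ with the uniform bound from \eqref{eq:apriori_u}; (iii) for the single remaining term with both derivatives on $\ss$, use the second-order Taylor/commutator estimate above to get an $O(1)$ bound that in fact tends to $0$ because the difference $\sigma(x-y)-\sigma(x)-y\cdot\mathrm{grad}\,\sigma(x)$ vanishes faster than $\abs{y}$ in the relevant $L^{2pq/(p-q)}$ sense (density of smooth functions, so $\mathrm{grad}^2\ss$ composed with a shrinking shift converges in $L^{2pq/(p-q)}$). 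The main obstacle, I expect, is step (iii): one must show not merely a uniform bound but actual decay, and this requires handling the second-difference of $\ss$ in $L^{2pq/(p-q)}$ rather than pointwise, i.e.\ a quantitative continuity-in-$L^r$ statement for second derivatives, combined with keeping the $u$-factor only in $L^p$ with no control on $\mathrm{grad}\,u$. A secondary nuisance is the endpoint case $p=q$, where $2pq/(p-q)=\infty$ and one instead uses the stated hypothesis $\ss\in W^{2,\infty}$ directly, with $u\in L^p$ carrying the whole space-time integrability.
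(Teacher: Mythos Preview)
Your overall scaffold---expand the double commutator, obtain $\delta$-uniform $L^q$ bounds via Young's inequality and the scaling $\int |y|^k\,|\partial^k J_\delta|\lesssim 1$, then pass to the limit first for smooth $\ss,u$ and conclude by density---is exactly the paper's strategy, and your H\"older arithmetic $\tfrac1q=\tfrac1p+2\cdot\tfrac{p-q}{2pq}$ is the right reason for the exponent $2pq/(p-q)$.

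The gap is in your picture of \emph{why} the limit is zero. You expect the expansion to split into (ii) ``standard Friedrichs commutators'' that vanish individually by Di~Perna--Lions, plus (iii) a single second-order remainder that vanishes because $\sigma(x-y)-\sigma(x)-y\cdot\mathrm{grad}\,\sigma(x)=o(|y|)$. Neither half of this is correct. In the paper's decomposition the pieces analogous to your step~(ii) are integrals of the form $\int \partial_i J_\delta(x-y)\bigl(\sigma_{\cdot}(y)-\sigma_{\cdot}(x)\bigr)(\cdots)u(y)\,\d y$; these are exactly Di~Perna--Lions-type objects, but such objects converge to $\partial_i\sigma_{\cdot}\,(\cdots)\,u$, not to $0$. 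Likewise the term carrying $\partial^2 J_\delta$ does not involve a second Taylor remainder of $\sigma$ at all: it has the \emph{product-of-first-differences} structure $\bigl(\sigma_{ik}(x)-\sigma_{ik}(y)\bigr)\bigl(\sigma_{jk}(x)-\sigma_{jk}(y)\bigr)$, whose limit is a nonzero $\partial_i\sigma_{ik}\,\partial_j\sigma_{jk}$-type expression. The double commutator vanishes only because these nonzero limits \emph{cancel in sum}: in the paper's notation $I_1\to 2|\nabla\ss|^2 u$, $I_2\to \partial_j(\sigma_{ik}\partial_i\sigma_{jk})u$, $I_3\to -(\partial_j\sigma_{ik}\partial_i\sigma_{jk}+|\nabla\ss|^2)u$, and the remaining piece $\to -\nabla(\ss\nabla\ss)u$, and one checks these add to zero. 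So your step~(ii) cannot be dispatched by ``invoke Lemma~\ref{thm:commutator1}'', and your step~(iii) is aimed at the wrong object; the real work is computing each limit explicitly and verifying the algebraic cancellation.
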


These estimates are sufficient 
to imply the convergence of the 
more complicated combinations 
of $E^{(2)}_\delta$ and $E^{(3)}_\delta$ 
in \eqref{eq:ito_rule_applied}, 
which is the main object of 
our investigation here (cf. {\cite[Proposition 7.4]{HKP2023}}).
\begin{theorem}
    [It\^{o}--Stratonovich related error term]
\label{thm:commutator3}
Fix $p \ge 2$ and $1 \le q \le p$. Let $S \in C^2(\R)$ 
satisfy the growth conditions \eqref{eq:bds_on_S}.

Let $u \in L^p(\Omega\times [0,T] \times \T^d)$, 
and suppose $\ss \in W^{2,pq/(p - q)}(\T^d;\R^{d\times m})$, 
with $\ss \in W^{2,\infty}(\T^d;\R^{d\times m})$ if $p = q$. 
Define $E^{(2)}_\delta$ and $E^{(3)}_\delta$ 
as in \eqref{eq:commutator_defin}. 
For each $\varphi \in C^\infty([0,T]\times \T^d)$, 
the following convergence holds:
\begin{equation}\label{eq:commutator_converge2}
	\begin{aligned}
\Ex \int_0^T\biggl| \, \int_\T 
		\varphi \bk{ S'(u_\delta)  E^{(3)}_\delta 
		-  S''(u_\delta) E^{(2)}_\delta \cdot \nabla  \bk{\ss u_\delta }}
		\,\d x\biggr| \,\d t\todelta 0.
	\end{aligned}
\end{equation}
\end{theorem}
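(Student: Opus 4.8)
The plan is to reduce \eqref{eq:commutator_converge2} to Lemmas \ref{thm:commutator1} and \ref{thm:commutator2} by peeling off a genuine double commutator. First I would record the purely algebraic identity that follows from the operator brackets in \eqref{eq:E_as_operators}, the relation $\bigl[\KK,\JJ_\delta\bigr](u)=-E^{(2)}_\delta[u]$, and the identity $\bigl[\KK,\JJ_\delta\bigr]\KK u=\bigl[\bigl[\KK,\JJ_\delta\bigr],\KK\bigr](u)+\KK\bigl[\KK,\JJ_\delta\bigr](u)$ (merely the definition of the outer bracket): namely,
\begin{equation*}
E^{(3)}_\delta[u]\;=\;\tfrac12\,\bigl[\bigl[\KK,\JJ_\delta\bigr],\KK\bigr](u)\;-\;\KK E^{(2)}_\delta[u].
\end{equation*}
Substituting this into the integrand of \eqref{eq:commutator_converge2}, the double-commutator part contributes $\tfrac12\int_{\T^d}\varphi\,S'(u_\delta)\,\bigl[\bigl[\KK,\JJ_\delta\bigr],\KK\bigr](u)\,\d x$, and this I would dispatch at once: Lemma \ref{thm:commutator2} gives $\bigl[\bigl[\KK,\JJ_\delta\bigr],\KK\bigr](u)\todelta 0$ in $L^q(\Omega\times[0,T]\times\T^d)$, while $\varphi S'(u_\delta)$ is bounded in $L^{p/(q-1)}(\Omega\times[0,T]\times\T^d)$ by \eqref{eq:apriori_u}--\eqref{eq:bds_on_S} (using $\norm{u_\delta}_{L^p}\le\norm{u}_{L^p}$), so that, after $\Ex\int_0^T\abs{\,\cdot\,}\,\d t$, H\"older's inequality --- valid since $\tfrac1q+\tfrac{q-1}{p}\le 1$, i.e.\ $q\le p$ --- kills this term.

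What then remains is $-\int_{\T^d}\varphi\,S'(u_\delta)\,\KK E^{(2)}_\delta\,\d x-\int_{\T^d}\varphi\,S''(u_\delta)\,E^{(2)}_\delta\cdot\nabla\bk{\ss u_\delta}\,\d x$, and here no termwise estimate is available: $\KK E^{(2)}_\delta=\nabla\bk{\ss E^{(2)}_\delta}$ carries a spatial derivative and need not vanish in any $L^r$, while $\nabla\bk{\ss u_\delta}$ is uncontrolled as $\delta\downarrow0$ since nothing is assumed on ${\rm grad}(u)$. The cancellation must therefore be produced algebraically, before estimating. I would integrate the first of these two integrals by parts in $x$, moving the derivative off $\sigma_{ik}(E^{(2)}_\delta)_k$ and onto $\varphi S'(u_\delta)$; expanding $\pd_{x^i}\bigl(\varphi S'(u_\delta)\bigr)=(\pd_{x^i}\varphi)S'(u_\delta)+\varphi S''(u_\delta)\,\pd_{x^i}u_\delta$ and then substituting $\sigma_{ik}\pd_{x^i}u_\delta=\pd_{x^i}\bk{\sigma_{ik}u_\delta}-(\pd_{x^i}\sigma_{ik})u_\delta=\bigl(\nabla\bk{\ss u_\delta}\bigr)_k-(\nabla\ss)_k\,u_\delta$, the expression collapses to
\begin{equation*}
\int_{\T^d}(\pd_{x^i}\varphi)\,S'(u_\delta)\,\sigma_{ik}(E^{(2)}_\delta)_k\,\d x\;-\;\int_{\T^d}\varphi\,S''(u_\delta)\,u_\delta\,\bk{\nabla\ss\cdot E^{(2)}_\delta}\,\d x,
\end{equation*}
the two $\int_{\T^d}\varphi\,S''(u_\delta)\,\nabla\bk{\ss u_\delta}\cdot E^{(2)}_\delta\,\d x$ terms that arise (one from the integration by parts, one from the statement) cancelling exactly. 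I expect this to be the crux of the argument: the derivative term generated by the integration by parts is precisely the counter-term to the $S''$-term of \eqref{eq:E2E3}, which is the structural reason \emph{only} the combination \eqref{eq:E2E3}, and neither summand separately, converges; getting the signs and the $\R^m$- versus scalar-valued conventions for $\KK$ exactly right is where the care is needed.

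It then remains only to estimate the two surviving integrals in the last display, and both are of the tame type already handled in (the Remark following) Lemma \ref{thm:commutator1}: each is $E^{(2)}_\delta$ paired against a fixed multiplier assembled from $\varphi$ and first derivatives of $\ss$, times a factor ($S'(u_\delta)$, respectively $S''(u_\delta)u_\delta$) of growth $\lesssim 1+\abs{u_\delta}^{q-1}$, hence bounded in $L^{p/(q-1)}$ by \eqref{eq:apriori_u}--\eqref{eq:bds_on_S}. Applying $\Ex\int_0^T\abs{\,\cdot\,}\,\d t$, H\"older's inequality, the Sobolev embedding for $\ss\in W^{2,pq/(p-q)}(\T^d)$ (respectively $\ss\in W^{2,\infty}(\T^d)$ when $p=q$), the a priori bound \eqref{eq:apriori_u}, and the convergence $E^{(2)}_\delta\todelta 0$ in $L^q(\Omega\times[0,T]\times\T^d)$ from Lemma \ref{thm:commutator1}, both terms tend to $0$ as $\delta\downarrow0$. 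Together with the double-commutator estimate from the first step, this yields \eqref{eq:commutator_converge2}.
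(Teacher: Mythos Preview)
Your proof is correct and follows essentially the same route as the paper. Both arguments arrive at the identical three-term decomposition
\[
\tfrac12 S'(u_\delta)\bigl[[\KK,\JJ_\delta],\KK\bigr](u)
\;-\;S''(u_\delta)\,u_\delta\,\nabla\ss\cdot E^{(2)}_\delta
\;+\;\text{(a total divergence in }\ss S'(u_\delta)E^{(2)}_\delta\text{)},
\]
and then invoke Lemmas~\ref{thm:commutator1} and~\ref{thm:commutator2}; the only cosmetic difference is that the paper first manipulates $S''(u_\delta)\,E^{(2)}_\delta\cdot\nabla(\ss u_\delta)$ pointwise via the Leibniz rule to obtain the identity before testing against $\varphi$, whereas you first isolate the double commutator inside $E^{(3)}_\delta$ and then integrate $S'(u_\delta)\KK E^{(2)}_\delta$ by parts against $\varphi$.
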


\begin{proof}
Both the theorem statement 
and the calculations to follow 
take much inspiration from the proof
of \cite[Prop.~3.4]{PS2018}. However, whereas 
the commutator between the operators
$\tilde{\KK} f:= \ss \cdot \nabla f$ and
${\JJ_\delta} f$, was considered there (with $m = 1$)  %=f_\delta = f*J_\delta$,
we have to consider the analogous question
for $\KK f := \nabla( \ss f)$ and $\JJ_\delta$.
We now explain how it is that \eqref{eq:E2E3} 
has a double commutator structure. 

%The integrand
%of \eqref{eq:commutator_converge2}
%takes the form $\varphi \bk{S'(u_\delta) E^{(3)}_\delta -  S''( u_\delta) E^{(4)}_\delta}$, where
%\begin{align*}%\label{eq:rho_ep_extra}
%	E^{(4)}_\delta[u]
%	:= \abs{E^{(2)}_\delta }^2 +   2 E^{(2)}_\delta \cdot \nabla  \bk{\ss u_\delta }
%	 =  \bigl(\nabla(\ss u)*J_\delta\bigr)^2
%	-\bigl(\nabla(\ss  u_\delta)\bigr)^2
%	=  \abs{\JJ_\delta \KK  u}^2
%	-\abs{\KK \JJ_\delta  u}^2 .
%\end{align*}
%Therefore, 
%
%following the calculations
%in \cite[p.~655]{PS2018},

Using the expression  
\eqref{eq:E_as_operators} 
for $E^{(2)}_\delta$, and 
following the calculations
in \cite[p.~655]{PS2018},
\begin{align*}
&	 E^{(2)}_\delta \cdot \nabla  \bk{\ss u_\delta }\\
%	& = \frac{1}{2} S''( u_\delta)
%	\bigl( \KK \JJ_\delta  u-\JJ_\delta \KK  u\bigr)
%	\bigl(\KK \JJ_\delta  u+\JJ_\delta \KK  u\bigr)
%	\\ &
%	= -\frac{1}{2} S''( u_\delta)
%	\bigl(\bigl[\KK,\JJ_\delta\bigr]( u)\bigr)^2
%	+S''( u_\delta)\bigl(\KK \JJ_\delta  u\bigr)
%	\bigl[\KK,\JJ_\delta\bigr]( u)
	& = S''( u_\delta) u_\delta
	\bigl[\KK,\JJ_\delta\bigr]( u) \cdot \nabla \ss
	+ \nabla S'( u_\delta) \cdot \ss
	\bigl[\KK,\JJ_\delta\bigr]( u)
	\\ &= S''( u_\delta) u_\delta
	\bigl[\KK,\JJ_\delta\bigr]( u) \cdot \nabla \ss
	+\nabla \bigl(\ss  S'( u_\delta)
	\bigl[\KK,\JJ_\delta\bigr]( u)\bigr)
	- S'( u_\delta) \nabla \bigl(\ss
	\bigl[\KK,\JJ_\delta\bigr]( u)\bigr).
\end{align*}

Writing the final term as 
$- S'(u_\delta) \KK \bigl[\KK,\JJ_\delta\bigr]( u)$, 
we can add the above to $S'(u_\delta) E^{(3)}_\delta $ 
using  \eqref{eq:E_as_operators}, 
to get that:
\begin{align*}
& S'( u_\delta)\, E^{(3)}_\delta -S''( u_\delta)
	 E^{(2)}_\delta \cdot \nabla  \bk{\ss u_\delta }\\
	 &
	= \frac{1}{2}S'( u_\delta)
	\Bigl[\bigl[\KK,\JJ_\delta\bigr],\KK\Bigr]( u)	
	+S''( u_\delta) u_\delta
	\bigl[\KK,\JJ_\delta\bigr]( u) \cdot \nabla \ss 
		+\nabla \bigl(\ss  S'( u_\delta)
	\bigl[\KK,\JJ_\delta\bigr]( u)\bigr).
\end{align*}
For the term $\nabla \big(\ss S'( u_\delta)
	\bigl[\KK,\JJ_\delta\bigr]( u)\bigr)$, we integrate-
by-parts in $x$ against $\varphi$.
We know already that
$\bigl[\KK,\JJ_\delta\bigr]( u) =  E^{(2)}_\delta \todelta 0$
in $L^q(\Omega\times [0,T]\times \T)$ by Lemma
\ref{thm:commutator1}. 
Convergence of the double commutator
bracket is given by Lemma~\ref{thm:commutator2}.
Now the entire claim \eqref{eq:commutator_converge2}
follows from the assumption $u \in 
L^p(\Omega \times [0,T] \times \T^d)$ 
and from \eqref{eq:bds_on_S}.
\end{proof}

% \subsection{Some applications}
% Before we finally present the 
% proof of Lemma \ref{thm:commutator2}, 
% we briefly mention some applications 
% of the above in view of our 
% motivating example \eqref{eq:motiv_example}. 

% In \cite{GHKP2022}

% In \cite{HKP2021}, for the 
% related Hunter--Saxton equation, 
% \begin{align*}
% 0 = \d u + v\, \pd_x u + \frac12 u^2 + \pd_x \bk{\sigma u} \circ \d W,
% \end{align*}
% we constructed 
% Galerkin approximations to the 
% viscous stochastic Camassa--Holm 
% equation. Having established existence 
% of martingale solutions, it was necessary 
% to prove pathwise uniqueness of 
% solutions in $L^\infty([0,T];L^2(\T))$. 
% Whilst we have the {\em a priori} bound 
% \eqref{eq:apriori_u} with $p = 2$, 
% it was not possible 

\section{Proof of Lemma \ref{thm:commutator2}}
\label{sec:proof_lemma2}

This section is solely devoted to the 
proof of Lemma \ref{thm:commutator2}. 
Nevertheless, we point out here that the 
technicalities of this proof 
constitute one of the primary departures 
from the $1$-dimensional or the 
divergence-free $d$-dimensional case.

\begin{proof}

Unpacking the commutator brackets, 
we have:
\begin{equation}\label{eq:commutators}
\begin{aligned}
	\Bigl[
	\bigl[\KK,\JJ_\delta\bigr],\KK\Bigr](u)
&	= \bigl[\KK,\JJ_\delta\bigr](\KK u)
	-\KK\bigl[\KK,\JJ_\delta\bigr](u)\\
 &
	= 2 \KK \JJ_\delta \KK  u-\JJ_\delta \KK \KK  u
	-\KK \KK \JJ_\delta  u.
\end{aligned}
\end{equation}
Implicitly summing over repeated indices 
over appropriate ranges, term-by-term we have:
\begin{align}
	&2\KK\JJ_\delta\KK  u(x)\notag\\
	& = 2  \sigma_{ik}(x) \int_{\T^d} 
		\pd_{x^ix^j}^2  J_\delta(x - y) \sigma_{jk}(y) u(y)\,\d y
	\label{eq:commute1}
	\\ & \quad
	+ 2\pd_{x^i} \sigma_{ik}(x) \int_{\T^d} 
		\pd_{x^j}  J_\delta(x - y) \sigma_{jk}(y) u(y)\,\d y,
	\label{eq:commute2}
	\\
	& \JJ_\delta \KK \KK  u (x)\notag\\
	& = \int_\R \pd_{x^ix^j}^2 J_\delta(x-y)
	\sigma_{ik}(y) \sigma_{jk}(y)  u(y) \;\d y
	\label{eq:commute3}
	\\ & \quad
	- \int_\R \pd_{x^i} J_\delta(x-y)\sigma_{jk}(y)
	\pd_{y^j} \sigma_{ik}(y) u(y)\;\d y,
	\label{eq:commute4}
	\\ \intertext{and}
	&\KK\KK \JJ_\delta  u (x)\notag\\
	& = \pd_{x^i} \big(\sigma_{ik}(x)
			\pd_{x^j} \sigma_{jk}(x)\big)
	\int_\R  J_\delta(x-y)   u(y) \;\d y
	\label{eq:commute5}
	\\ & \quad +2\sigma_{ik}(x)\pd_{x^j} \sigma_{jk}(x) 
	\int_\R \pd_{x^i} J_\delta(x-y) u(y)\;\d y
	\label{eq:commute6}
	\\ & \quad
	+ \sigma_{ik}(x) \sigma_{jk}(x)
	\int_\R \pd_{x^ix^j}^2	J_\delta(x-y) u(y)\;\d y
	\label{eq:commute7}
	\\ & \quad +\sigma_{ik}(x) \pd_{x^i} \sigma_{jk}(x)
	\int_\R \pd_{x^j} J_\delta(x-y) u(y)\;\d y
	\label{eq:commute8}.
\end{align}

We will estimate \eqref{eq:commute1} to
\eqref{eq:commute8}
by considering the sums
$$
{I}_1:= \eqref{eq:commute2}
-\eqref{eq:commute6},
\quad
I_2 := -\eqref{eq:commute4} -\eqref{eq:commute8},  
\quad
{I}_3:= \eqref{eq:commute1}
-\eqref{eq:commute3} - \eqref{eq:commute7},
$$
and the stand-alone integral \eqref{eq:commute5}. 
The terms \eqref{eq:commute2}, \eqref{eq:commute5}, 
and \eqref{eq:commute6} are, of course, 
absent when $\pd_j \sigma_{jk} \equiv 0$.

From \eqref{eq:commutators},
we see that
\begin{align}\label{eq:commutator_decomposition}
\Bigl[\bigl[\KK,\JJ_\delta\bigr],\KK\Bigr]( u)
	={I}_1+ I_2+{I}_3 -\eqref{eq:commute5}.
\end{align}
We will use \cite[Lemma II.1]{DL1989}
to establish that \eqref{eq:commutator_decomposition}
tends to zero in an appropriate sense.
Estimating the terms in \eqref{eq:commutator_decomposition}
separately, we have
{
\begin{align*}
&\norm{{I}_1}_{L^q(\T^d)}\\
	 &
	=2\biggl\|\int_{\T^d} \pd_{x^j} \sigma_{jk}(\cdot) \, \pd_{x^i}  J_\delta(\cdot-y)
	\Bigl( \sigma_{ik}(y) 
	- \sigma_{ik}(\cdot) \Bigr)
	 u(y) \;\d y\biggr\|_{L^q(\T^d)}
	\\ & 
 	\le 2 \biggl\| \int_{\T^d}
 	\abs{\cdot-y}\abs{\pd_{x^i} J_\delta(\cdot - y)}
 		\frac{\abs{\sigma_{ik}(\cdot)  - \sigma_{ik}(y) }}{\abs{\cdot - y}}
 			\abs{\pd_{x^j} \sigma_{jk}(\cdot)} \abs{ u(y)} \;\d y\biggr\|_{L^q(\T^d)}\\
& \le C \norm{\abs{\cdot} \pd_{x^i} J_\delta}_{L^1(\T^d)} 
	\norm{\abs{\nabla \sigma_{ik}}^2}_{L^{pq/(p - q)}(\T^d)} 
		\norm{u}_{L^p(\T^d)}\\
& \le C \norm{u}_{L^p(\T^d)},
\end{align*}
}where we have used Young's convolution
inequality and 
$\bigl\|\abs{\cdot} \pd_{x^i} J_\delta(\cdot)
\bigr\|_{L^1(\T^d)}\lesssim 1$. Similarly,
{
\begin{align*}
&\norm{I_2}_{L^q(\T^d)} \\
& = \norm{\int_{\T^d}  \, \pd_{x^i}  J_\delta(\cdot-y)
	\Bigl( \sigma_{ik}(y)\pd_{y^j} \sigma_{jk}(y) 
	- \sigma_{ik}(\cdot) \pd_{x^j} \sigma_{jk}(\cdot)\Bigr)
	 u(y) \;\d y}_{L^q(\T^d)}\\
	 & \le \norm{\int_{\T^d}  \,\abs{\cdot - y}\abs{ \pd_{x^i}  J_\delta(\cdot-y)}
	\frac{\abs{\sigma_{ik}(y)\pd_{y^j} \sigma_{jk}(y) 
	- \sigma_{ik}(\cdot) \pd_{x^j} \sigma_{jk}(\cdot)}}{\abs{\cdot - y}}
	 \abs{u(y)} \;\d y}_{L^q(\T^d)}\\
	 & \le C \norm{\abs{\cdot} \pd_{x^i} J_\delta}_{L^1(\T^d)} 
	 	 \norm{\nabla\bk{\sigma_{ik} \,\pd_{y^j}\sigma_{jk}}}_{L^{pq/(p - q)}}
	 	 \norm{u}_{L^p(\T^d)}.
\end{align*}

}
And finally, 
{
\begin{align*}
&	\norm{{I}_3}_{L^q(\T^d)}\\
	& =\norm{\int_\T^d \pd_{xx}^2
	J_\delta(\cdot-y)\big(2 \sigma_{jk}(\cdot)\sigma_{ik}(y)
	-\sigma_{jk}(\cdot)\sigma_{ik}(\cdot) -\sigma_{jk}(y)\sigma_{ik}(y)\big)
	 u(y) \;\d y}_{L^q(\T^d)}
	\\ &
%	= \norm{\int_\T^d \pd_{xx}^2 J_\delta(\cdot-y)
% 	\bigl(\sigma(\cdot)-\sigma(y)\bigr)^2
% 	 u(y)\;\d y}_{L^q(\T^d)}
% 	\\ &
% 	\le \norm{\int_\T^d
% 	\abs{\pd_{xx}^2 J_\delta(\cdot-y)}
% 	\abs{2 \sigma(\cdot) \sigma(y) - \sigma^2(\cdot)
% 	-\sigma^2(y)}\, \abs{ u(y)}\;\d y}_{L^q(\T^d)}
% 	\\	 &
 	\le C \norm{\int_\T^d \abs{\cdot-y}^2
 	\abs{\pd_{x^ix^j}^2J_\delta(\cdot-y)}
 	\abs{\frac{\sigma_{ik}(\cdot) - \sigma_{ik}(y)}{\cdot-y}}
 	 	\abs{\frac{\sigma_{jk}(\cdot) - \sigma_{jk}(y)}{\cdot-y}}
 	\abs{ u(y)} \;\d y}_{L^q(\T^d)}
 	\\ &
% 	\le C	\norm{\pd_x \sigma}_{L^\infty(\T^d)}^2
% 	\norm{\int_\T^d
% 	(\cdot - y)^2\abs{\pd_{xx}^2 J_\delta(\cdot-y)}
% 	\abs{ u(y)} \;\d y}_{L^q(\T^d)}
% 	\\ &
 	\le C\norm{\nabla\sigma_{ik}}_{L^{2pq/(p - q)}(\T^d)}
 		\norm{\nabla\sigma_{jk}}_{L^{2pq/(p - q)}(\T^d)}
 	\Bigl\|(\cdot)^2\pd_{x^ix^j}^2J_\delta(\cdot)\Bigr\|_{L^1(\T^d)}
 	\norm{ u}_{L^p(\T^d)}
 	\\& \le
 	C  	\norm{ u}_{L^p(\T^d)}.
\end{align*}}
We also have
$$
\norm{\eqref{eq:commute5}}_{L^q(\T^d)}
\le C \norm{J_\delta}_{L^1(\T^d)}
\norm{\pd_{x^i}(\sigma_{ik}\pd_{x^j} \sigma_{jk})}_{L^{pq/(p - q)}(\T^d)}
\norm{ u(t)}_{L^p(\T^d)}.
$$

Given the last three ($\delta$-independent) bounds,
it is sufficient to establish convergence of
\eqref{eq:commutators} under the
assumption that $\ss$, $ u$ are smooth (in $x$).
The general case follows by density using the
established bounds. Under this assumption, 
and using $\pd_{x^ix^j}J_\delta = \pd_{x^jx^i}J_\delta$ 
we have
\begin{align*}
	{I}_3
	& =\int_{\T^d} \pd_{x^ix^j}^2 J_\delta(x - y)
	\bigl( 2 \sigma_{ik}(x)\sigma_{jk}(y)-\sigma_{ik}(x) \sigma_{jk}(x)
	-\sigma_{ik}(y) \sigma_{jk}(y)\bigr)  u(y) \;\d y
		\\ &
	=- 2\int_{\T^d}\pd_{x^ix^j}^2 J_\delta(x-y)\\
&\qquad\times\bk{\bk{\sigma_{ik}(y) - \sigma_{ik}(x)} \sigma_{jk}(y)
	 + \bk{\sigma_{jk}(x) - \sigma_{jk}(y)}\sigma_{ik}(x)}
	  u(y) \;\d y
	\\ &
	=- 2\int_{\T^d}\pd_{x^ix^j}^2 J_\delta(x-y)\\
&\qquad\times\bk{\bk{\sigma_{ik}(y) - \sigma_{ik}(x)} \sigma_{jk}(y) 
	+ \bk{\sigma_{ik}(x) - \sigma_{ik}(y)}\sigma_{jk}(x)}
	  u(y) \;\d y
	\\ &
	=- 2\int_{\T^d} u(y) \pd_{x^ix^j}^2 J_\delta(x-y)\\
&\quad\times\frac{\bk{x - y}\otimes\bk{x - y}}{2}:
	\frac{\sigma_{ik}(y)-\sigma_{ik}(x)}{\abs{y-x}^2}(y -x) \otimes\frac{\sigma_{jk}(y)-\sigma_{jk}(x)}{\abs{y-x}^2} (y - x) 
	   \;\d y
	\\ & = -2 u(x) {\rm grad}(\sigma_{ik}) \otimes {\rm grad}(\sigma_{jk})
	 : \int_{\T^d} \frac{z \otimes z}{2}\pd_{z^iz^j}^2
	J_\delta(z) \; \d z+o_\delta(1),
\end{align*}
where $\int_{\T^d} \frac{z \otimes z}{2}
\pd_{z^iz^j}^2 J_\delta(z) \; \d z 
=  (e_i \otimes e_j + e_j \otimes e_i)/2$, 
with $e_i$ being the unit vector in 
the $i$th direction. A similar
calculation can be carried out for ${I}_1$, 
$I_2$, and \eqref{eq:commute5}
in each of which case there 
is only one derivative on the mollifier, 
and can be treated as in the 
proof of \cite[Lemma II.1]{DL1989}.
Reasoning as in the proof
of \cite[Lemma II.1]{DL1989}, we arrive at
\begin{align*}
	{I}_1&\overset{\delta \downarrow 0}{\to}
	2\abs{\nabla \ss}^2  u,
	 \qquad \qquad \quad \qquad \qquad\,\,  I_2 \overset{\delta \downarrow 0}{\to} 
	 	\pd_j \bk{\sigma_{ik} \,\pd_i \sigma_{jk}} u,\\
	\quad {I}_3  &\overset{\delta \downarrow 0}{\to}
-\bk{	\pd_j \sigma_{ik} \pd_i \sigma_{jk} +  \abs{\nabla \ss}^2}u,\quad\,\,
	-\eqref{eq:commute5}\overset{\delta \downarrow 0}{\to}
	-\nabla \bigl(\ss\nabla \ss \bigr)  u,
\end{align*}
in $L^q(\R)$, for
	$\d\mathbb{P}\otimes \d t$-a.e. 
Adding these terms together, with
reference to \eqref{eq:commutator_decomposition}, 
$$
\Bigl[\bigl[\KK,\JJ_\delta\bigr],\KK\Bigr]( u) 
	= I_1 + I_2  + I_3 - \eqref{eq:commute5} \to  0, \quad \text{in $L^q(\R)$, for
	$\d\mathbb{P}\otimes \d t$-a.e.},
$$ 
and
an application of the dominated convergence 
theorem establishes the lemma.
\end{proof}

\bibliographystyle{plain}

\end{document}